\newtheorem{theorem}{Theorem}[section]
\newtheorem{lemma}[theorem]{Lemma}
\newtheorem{remark}[theorem]{Remark}
\let\Section=\section
\def\section{\setcounter{equation}{0}\Section}
\newcommand{\R}{\mathbb{R}}
\def\RR{\mathbb{R}}
\def\EE{\mathbb{E}}
\def\eps{{\epsilon}}
\def\les{{\lesssim}}
\begin{document}

\title{Chaos expansion of 2D parabolic Anderson model}

\author{{ Yu \sc Gu}  \    {Jingyu \sc Huang}  
}

\address[Yu Gu]{Department of Mathematics, Carnegie Mellon University, Pittsburgh, PA 15213}

\address[Jingyu Huang]{Department of Mathematics, University of Utah, Salt Lake City, UT 84112}

\date{}

\begin{abstract}
We prove a chaos expansion for the 2D parabolic Anderson Model in small time, with the expansion coefficients expressed in terms of the density function of the annealed polymer in a white noise environment. \\

\noindent Keywords:  { \it parabolic Anderson model, chaos expansion, renormalized self-intersection local time} .\\
\noindent{\it \noindent AMS 2010 subject classification.}
	Primary 60H15, 60H07; Secondary 35R60.
\end{abstract}

	\maketitle
	
\section{Introduction and main result}
Consider the continuous parabolic Anderson model in $d=2$ formally written as 
\begin{equation}\label{eq:PAM}
\partial_tu(t,x)= \frac{1}{2}\Delta u(t,x) + u \cdot  (\dot{W}(x)-\infty), \quad t \geq 0\,, x \in \RR^2\,,
\end{equation}
where $\dot{W}(x)$ is a spatial white noise defined on the probability space $(\Omega,\mathcal{F},\mathbb{P})$ formally satisfying 
\[
\EE[\dot{W}(x)\dot{W}(y)]=\delta(x-y).
\]
The equation \eqref{eq:PAM} was analyzed in \cite{gubinelli2015paracontrolled,hairer2014theory,hairer2015simple} by different approaches including the theory of regularity structures, para-controlled calculus, and the method of correctors and two-scale expansions. The main results in these references showed that a smoothed version of \eqref{eq:PAM} converges to some limit that is independent of the mollification. 
 
More precisely, let $\varphi:\R^2\to\R_+$ be a smooth and compactly supported function on $\RR^2$ satisfying $\varphi(x)=\varphi(-x)$ and $\int \varphi=1$. Define $\varphi_\eps(\cdot)=\eps^{-2}\varphi(\cdot/\eps)$ and
\begin{equation}
\dot{W}_{\epsilon}(x) = \int_{\RR^2} \varphi_{\epsilon}(x-y)dW(y)
\end{equation} as the mollification of $\dot{W}$. The covariance function of $\dot{W}_\eps$ is
\begin{equation}
R_\eps(x-y):=\EE [\dot{W}_{\epsilon}(x)\dot{W}_{\epsilon}(y)] = 
\varphi_{\epsilon}\star \varphi_{\epsilon}(x-y)\,.
\end{equation}
Let $u_\eps$ be the solution to the equation with smooth coefficients
\begin{equation}\label{e.pamsmooth}
\partial_t u_\eps(t,x)=\frac12\Delta u_\eps(t,x)+u_\eps(\dot{W}_\eps(x)-C_\eps),
\end{equation}
with the diverging constant
\begin{equation}\label{e.ceps}
C_\eps=\frac{1}{\pi}\log \eps^{-1}.
\end{equation}
Then $u_\eps$ converges in some weighted H\"older space to a limit $u$ that is defined to be the solution to \eqref{eq:PAM}, see \cite[Theorem 4.1]{hairer2015simple}. 

While the solution to \eqref{eq:PAM} is well-defined, its statistical property remains a challenge. We refer to \cite{allez2015continuous,cannizzaro2015multidimensional,chouk2016invariance,GX,MP} for some relevant discussions. The goal of this note is to provide a Wiener chaos expansion of the solution $u$, in the short time regime. We assume $u_\eps(0,x)=u_0(x)$ for some bounded function $u_0$. Theorem~\ref{t.mainth} below shows that for small $t$, $u_\eps(t,x)\to u(t,x)$ in $L^2(\Omega)$ as $\eps\to0$, and $u(t,x)$ is written explicitly as a Wiener chaos expansion in terms of the probability density of a polymer in a white noise environment, see \eqref{e.chaos}. We hope that the explicit chaos expansion will provide another way of proving the convergence to \eqref{eq:PAM}, e.g. from a discrete system using the general criteria proved in \cite{caravenna2013polynomial,MP}. The tool we use is a combination of the Feynman-Kac representation and Malliavin calculus. By writing $u_\eps(t,x)$ in terms of a chaos expansion, it suffices to pass to the limit in each chaos.


\subsection{Elements of Malliavin calculus}

We give a brief introduction to Malliavin calculus and refer to \cite{Nualart} for more details. For any function $\phi\in L^2(\R^2)$, we define $W(\phi)=\int \phi \ dW$. Let $F$ be a smooth and cylindrical random variable of the form
\begin{equation*}
F= f(W(\phi_1), \dots, W(\phi_n))\,,
\end{equation*}
with $\phi_i \in L^2(\RR^2)$, $f \in C_p^{\infty}(\RR^n)$ (namely $f$ and all its partial derivatives have polynomial growth), then the Malliavin derivative of $F$, denoted by $DF$, is the $L^2(\RR^2)-$valued random variable defined by 
\begin{equation}
D F = \sum_{j=1}^n \frac{\partial f}{\partial x_j} (W(\phi_1), \dots, W(\phi_n)) \phi_j\,.
\end{equation}
For each positive integer $k$, $D^kF$ is defined to be the $k$-th iterated derivative of $F$, which is a random variable taking values in $L^2(\RR^2)^{\otimes k}$, the $k$-th tensor product of $L^2(\RR^2)$. 
The operator $D^k$ is closable from $L^2(\Omega)$ into $L^2(\Omega; L^2(\RR^2)^{\otimes k})$ and we define the Sobolev space $\mathbb{D}^{k, 2}$ as the closure of the space of smooth and cylindrical random variables under the norm 
\begin{equation}
\|D^k F\|_{k,2} = \sqrt{\EE \left[F^2 + \sum_{j=1}^k \|D^j F\| ^2_{L^2(\RR^2)^{\otimes j}}\right]} \,.
\end{equation}
Define $\mathbb{D}^{\infty,2} = \bigcap_{k=1}^{\infty} \mathbb{D}^{k, 2}$, and $L^2(\RR^2)^{\odot k}$ as the $k$-th symmetric tensor product of $L^2(\RR^2)$.

For any integer $n\geq 0$, we denote by ${ \bf H}_n$ the $n$-th Wiener chaos of $W$. We recall that ${\bf H}_0$ is simply $\RR$, and for $n\geq 1$, ${\bf H}_n$ is the closed linear subspace of $L^2(\Omega)$ generated by the random variables 
\[
\{H_n(W(h)): h \in L^2(\RR^2), \|h\|_{L^2(\RR^2)}=1\},
\]
 where $H_n$ is the $n$-th order Hermite polynomials. For any $n\geq 1$, the mapping 
 \[
 I_n(h^{\otimes n}) := H_n(W(h))
 \]
  can be extended to a linear isometry between $L^2(\RR^2)^{\odot n}$ and ${\bf H}_n$, with the isometric relation 
\begin{equation}
\EE [I_n(h^{\otimes n})^2] = n! \|h^{\otimes n}\|^2_{L^2(\RR^2)^{\otimes n}}\,.
\end{equation}
Consider now a random variable $F\in L^2(\Omega)$, 
it can be written as
\begin{equation}
F = \EE [F] + \sum_{n=1}^{\infty} I_n(f_n)\,,
\end{equation}
where the series converges in $L^2(\Omega)$, and the coefficients $f_n \in L^2(\RR^2)^{\odot n}$ are determined by $F$. This identity is called the Wiener-chaos expansion of $F$. 

When the above $F \in \mathbb{D}^{\infty, 2}$, the $n-$th coefficient $f_n$ in the Wiener chaos expansion of $F$ can be explicitly written as \cite[Page 3, equation (7)]{Stroock}
\begin{equation}\label{eq:Stroock formula}
f_n = \frac{\EE [D^n F]}{n!}\,.
\end{equation}

\subsection{Brownian self-intersection local time and polymer in white noise}

The self-intersection local time of the planar Brownian motion is a classical subject in probability theory \cite{chen2010random,le1992some,le1994exponential,varadhan1969appendix,yor1986precisions}. In the following, we discuss its connections to the parabolic Anderson model.

Using the Feynman-Kac formula, we write the solution to \eqref{e.pamsmooth} as 
\begin{equation}\label{e.fkre}
u_{\epsilon}(t,x) = \EE_{B} \left[ u_0(x+B_t) \exp \left( \int_0^t \dot{W}_{\epsilon}(x+B_s) ds - C_\eps t\right)\right]\,,
\end{equation}
where $B$ is a standard Brownian motion starting from the origin which is independent from $\dot{W}$, and $\EE_B$ denotes the expectation with respect to $B$. Taking expectation with respect to $\dot{W}_\eps$ and using the fact that the exponent inside the expectation in \eqref{e.fkre} is of Gaussian distribution for each realization of the Brownian motion, we obtain 
\[
\EE[u_\eps(t,x)]=\EE_B\left[u_0(x+B_t) \exp\left(\int_0^t\int_0^s R_\eps(B_s-B_u)duds-C_\eps t\right)\right],
\]
where we recall that $R_\eps$ is the covariance function of $\dot{W}_\eps$. It is well-known that
\begin{equation}\label{e.nov162}
\gamma_\eps(t,B):=\int_0^t\int_0^s R_\eps(B_s-B_u)duds-\int_0^t\int_0^s \EE_B[R_\eps(B_s-B_u)]duds\to \gamma(t,B)
\end{equation}
almost surely, and $\gamma(t,B)$ is the so-called renormalized self-intersection local time of the planar Brownian motion formally written as 
\begin{equation}\label{e.polymer}
\gamma(t,B)=\int_0^t\int_0^s \delta(B_s-B_u)duds-\int_0^t\int_0^s \EE_B[\delta(B_s-B_u)]duds.
\end{equation}
In addition, there exists some critical $t_c>0$ such that 
\begin{equation}\label{e.exponentialintegrability}
\EE_B[\exp(\gamma(t,B))]\left\{\begin{array}{cc}
<\infty & t<t_c,\\
=\infty & t>t_c.
\end{array}
\right.
\end{equation}
The renormalization constant in \eqref{e.ceps} matches the expectation in \eqref{e.nov162} up to an $O(1)$ correction, and a calculation as in \cite[Lemma 1.1]{GX} shows that there exists constants $\mu_1,\mu_2$ such that 
\begin{equation}\label{e.remainder}
\int_0^t \int_0^s \EE_B[R_\eps(B_s-B_u)]duds-C_\eps t\to t(\mu_1+\mu_2\log t)
\end{equation}
as $\eps\to0$. For small $t$, it was shown in \cite{GX} that
\begin{equation}
\EE[u(t,x)]=\lim_{\eps\to0}\EE[u_\eps(t,x)]=e^{t(\mu_1+\mu_2\log t)}\EE_B[u_0(x+B_t) e^{\gamma(t,B)}].
\end{equation}
This motivates us to define
\[
F(t):=\log \EE_B[e^{\gamma(t,B)}],
\]
so we can write
\[
\EE[u(t,x)]=e^{t(\mu_1+\mu_2\log t)+F(t)}\hat{\EE}_{t,B}[u_0(x+B_t)],
\]
where $\hat{\EE}_{t,B}$ denotes the expectation with respect to the Wiener measure tilted by the factor $e^{\gamma(t,B)}$, i.e., 
\[
\hat{\EE}_{t,B}[X]= \frac{\EE_B[Xe^{\gamma(t,B)}]}{\EE_B[e^{\gamma(t,B)}]}=\EE_B[X e^{\gamma(t,B)}] e^{-F(t)}
\]
for any bounded $X$. By the formal expression in \eqref{e.polymer}, we can view $\hat{\EE}_{t,B}$ as the expectation with respect to the annealed measure of a polymer in a white noise environment. By \eqref{e.exponentialintegrability}, it is clear that the measure is absolutely continuous with respect to the Wiener measure for small $t$. Applying the Radon-Nikodym theorem, for any $n\in \mathbb{Z}_+$ and $0<s_1<\ldots<s_n\leq t<t_c$, there exists a non-negative measurable function, denoted by 
\[
\mathscr{F}_{s_1,\ldots,s_n}: \R^{2n}\to\R,
\] such that 
\[
\hat{\EE}_{t,B}[1_{A}(B_{s_1},\ldots,B_{s_n})]=\int_A \mathscr{F}_{s_1,\ldots,s_n}(x_1,\ldots,x_n)dx
\]
for all $A\subset \R^{2n}$. In other words, $\mathscr{F}_{s_1,\ldots,s_n}$ is the joint spatial density function of the polymer path at $s_1<\ldots<s_n$. We note that $\mathscr{F}$ actually depends on $t$ since the tilted measure depends on $t$. For our purpose, we use the simplified notation since $t$ is fixed. It is an elementary exercise to show that $\mathscr{F}_{s_1,\ldots,s_n}(x_1,\ldots,x_n)$ is jointly measurable in $(s_1,\ldots,s_n,x_1,\ldots,x_n)$. For the convenience of the reader, we present a proof in the appendix.

Denote $[0,t]_<^n:=\{0\leq s_1<\ldots<s_n\leq t\}$, the following is our main result. 
\begin{theorem}\label{t.mainth}
There exists $t_0>0$ such that for each $t\in (0,t_0),x\in \RR^2$, the random variable $u_\eps(t,x)$ converges in $L^2(\Omega)$ to 
\begin{equation}\label{e.chaos}
u(t,x)= \sum_{n=0}^{\infty} I_n(f_n(\cdot; t,x))\,.
\end{equation}
The coefficient $f_n(\cdot; t,x)$ is given by
\begin{equation}\label{e.excoeff}
\begin{aligned}
&f_n(y_1,\ldots,y_n; t,x) \\
&= e^{t(\mu_1+\mu_2\log t)+F(t)} \int_{\R^2} \int_{[0,t]_<^n}u_0(x+z)\mathscr{F}_{s_1,\ldots,s_n,t}(y_1-x,\ldots,y_n-x,z)dsdz.
\end{aligned}
\end{equation}
\end{theorem}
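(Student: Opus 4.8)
The plan is to work at the level of the mollified equation \eqref{e.pamsmooth}, write down its Wiener chaos expansion explicitly using the Feynman--Kac representation \eqref{e.fkre}, and then pass to the limit $\eps\to 0$ chaos by chaos. The key observation is that for fixed Brownian path $B$, the exponent $\int_0^t \dot W_\eps(x+B_s)\,ds$ is a centered Gaussian, so $\exp(\int_0^t \dot W_\eps(x+B_s)\,ds - C_\eps t)$ has an explicit chaos expansion whose $n$-th kernel is the symmetrization of $\tfrac{1}{n!}\varphi_\eps(x+B_{s_1}-\cdot)\otimes\cdots\otimes\varphi_\eps(x+B_{s_n}-\cdot)$ integrated against a Gaussian normalization. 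Taking $\EE_B$ inside (justified by Fubini on each fixed chaos, using the $L^2(\Omega)$-orthogonality of the chaoses), one finds
\[
f_n^\eps(y_1,\ldots,y_n;t,x) = \frac{1}{n!}\,\EE_B\Big[u_0(x+B_t)\, e^{\gamma_\eps(t,B)+r_\eps(t)}\!\!\sum_{\sigma\in S_n}\prod_{j=1}^n \varphi_\eps\big(y_{\sigma(j)}-x-B_{s_j}\big)\,\Big]\Big|_{\text{integrated over }[0,t]_<^n\ \text{in }s},
\]
where $r_\eps(t)$ collects the deterministic remainder in \eqref{e.remainder}. Equivalently, by Stroock's formula \eqref{eq:Stroock formula} applied to $u_\eps(t,x)\in\mathbb D^{\infty,2}$, one computes $\EE[D^n u_\eps(t,x)]$ directly: each Malliavin derivative brings down one factor $\varphi_\eps(\cdot - x - B_{s_j})$ with a time integral, and the surviving expectation over the noise reproduces $\exp(\gamma_\eps(t,B) + \text{remainder})$ by the same Gaussian computation that gave \eqref{e.nov162}.

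The second step is to identify the limit of $f_n^\eps$. Using \eqref{e.nov162}, \eqref{e.remainder} and the short-time exponential integrability \eqref{e.exponentialintegrability}, the prefactor $e^{\gamma_\eps(t,B)+r_\eps(t)}$ converges to $e^{\gamma(t,B)}e^{t(\mu_1+\mu_2\log t)}$ in $L^p(\Omega_B)$ for $t$ small, while the product $\prod_j \varphi_\eps(y_{\sigma(j)}-x-B_{s_j})$ acts as an approximate identity that, after integrating against a test function in $y$ and in $s$, should select the joint density $\mathscr F_{s_{\sigma^{-1}(1)},\ldots,s_{\sigma^{-1}(n)},t}$ of the polymer path — this is exactly the object defined via Radon--Nikodym in the paragraph preceding the theorem. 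Symmetrizing, one recovers the claimed formula \eqref{e.excoeff} (the symmetrization in $\sigma$ together with reordering the $s_j$ into $[0,t]_<^n$ produces the single symmetric kernel, with $z=B_t-x$ accounting for the terminal density variable). I would make this rigorous by testing against smooth compactly supported $g\in L^2(\R^{2n})$, writing $\langle f_n^\eps,g\rangle = \hat\EE_{t_\eps,B}[\,\cdots]$-type expressions, and using that mollification converges weakly against continuous bounded integrands together with dominated convergence controlled by the $L^p$ bound on the polymer weight.

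The final step is the $L^2(\Omega)$ convergence of the full series. By the chaos isometry, $\|u_\eps(t,x) - \sum_n I_n(f_n)\|_{L^2(\Omega)}^2 = \sum_n n!\,\|f_n^\eps - f_n\|_{L^2(\R^{2n})}^2$, so it suffices to prove (i) $n!\,\|f_n^\eps\|^2 \le c\,\rho^n$ and $n!\,\|f_n\|^2 \le c\,\rho^n$ for some $\rho<1$ uniformly in small $\eps$, and (ii) $f_n^\eps\to f_n$ in $L^2(\R^{2n})$ for each $n$; then dominated convergence on the sum over $n$ finishes the proof, and the limit being $u(t,x)$ follows since $u_\eps(t,x)\to u(t,x)$ is already known. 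For (i) the natural route is to bound $n!\,\|f_n^\eps\|_{L^2}^2$ by an expectation over two independent copies $B,\tilde B$ of the Brownian motion of $u_0(x+B_t)u_0(x+\tilde B_t)e^{\gamma_\eps(t,B)+\gamma_\eps(t,\tilde B)+\text{rem}}$ times a product of $\varphi_\eps\star\varphi_\eps$ evaluated at $B_{s_j}-\tilde B_{s_j}$, integrated over $s\in[0,t]_<^n$; recognizing $\sum_n$ of this as (a piece of) $\EE_{B,\tilde B}[\cdots e^{\int_0^t R_\eps(B_s-\tilde B_s)ds}]$-style quantity and invoking the exponential integrability that underlies \eqref{e.exponentialintegrability} gives geometric decay for $t<t_0$ with $t_0$ possibly smaller than $t_c$. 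I expect step (i), the uniform-in-$\eps$ geometric bound on the chaos norms for small $t$, to be the main obstacle: it is where the precise short-time threshold $t_0$ is determined, it requires careful handling of the two-replica self-intersection local time and the logarithmically divergent renormalization, and it is what makes the interchange of $\lim_{\eps\to0}$ with the infinite sum legitimate.
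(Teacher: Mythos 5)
Your overall architecture is the same as the paper's (Feynman--Kac plus Stroock's formula for the kernels of $u_\eps$, chaos-by-chaos convergence, a uniform-in-$\eps$ geometric bound $n!\,\|f_{\eps,n}\|^2\leq (Ct)^n$, and dominated convergence over $n$), but the step you yourself flag as the crux --- the uniform bound (i) --- is set up incorrectly. Each kernel factor is $\Phi^\eps_{t,x,B}(y_k)=\int_0^t\varphi_\eps(x+B_s-y_k)\,ds$, so when you square the kernel and integrate out $y_k$ against two independent replicas $B^1,B^2$ you obtain $\bigl(\int_{[0,t]^2}R_\eps(B^1_s-B^2_u)\,ds\,du\bigr)^n$, with \emph{independent} time variables for the two paths, i.e.\ the mutual intersection local time over the whole square $[0,t]^2$. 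Your write-up instead pairs the times diagonally, producing $\prod_j\varphi_\eps\star\varphi_\eps(B_{s_j}-\tilde B_{s_j})$ and, after summing over $n$, the exponent $\int_0^t R_\eps(B_s-\tilde B_s)\,ds$. In $d=2$ this equal-time collision functional diverges as $\eps\to0$ (its mean already behaves like $\tfrac{1}{2\pi}\log\eps^{-1}$, since planar Brownian motion does not hit points), so there is no uniform exponential moment for it and the claimed geometric bound cannot be extracted this way. The correct object $\int_{[0,t]^2}R_\eps(B^1_s-B^2_u)\,ds\,du$ does satisfy the uniform moment bound $n!(Ct)^n$ of \eqref{e.a2}; the paper then decouples it from the weights $e^{\gamma_\eps(t,B^j)}$ by Cauchy--Schwarz and uses Stirling to turn $(Ct)^n\sqrt{(2n)!}/(n!)^2$ into $(Ct)^n/n!$ (Lemmas~\ref{l.bound} and~\ref{l.localtime}).

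A second, smaller gap concerns step (ii): the chaos isometry requires $f_{\eps,n}\to f_n$ \emph{strongly} in $L^2(\R^{2n})$, but your identification of the limit proceeds by pairing against smooth compactly supported test functions, which only yields weak convergence, and weak convergence plus uniform norm bounds does not upgrade to norm convergence. The paper closes this by first proving that the (intermediate) kernels are Cauchy in $L^2$ --- computing $\langle\tilde f_{\eps_1,n},\tilde f_{\eps_2,n}\rangle$ and using the $L^2$ convergence \eqref{e.a3} of $\int_{[0,t]^2}R_{\eps_1,\eps_2}(B^1_s-B^2_u)\,ds\,du$ --- and only afterwards identifying the strong limit with the polymer-density formula \eqref{e.excoeff} through the $L^1$ convergence $\varphi_\eps^{\otimes n}\star\mathscr{G}\to\mathscr{G}$. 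Relatedly, the paper inserts the intermediate kernel $\tilde f_{\eps,n}$ (with $\gamma_\eps$ replaced by $\gamma$) so that the convergence of the random weight (handled by a.s.\ convergence plus uniform integrability from \eqref{e.a1}--\eqref{e.a2}, Lemma~\ref{l.con1}) is separated from the convergence of the mollified kernel (Lemma~\ref{l.con2}); your plan merges these two limits and would need an explicit uniform-integrability argument to justify the interchange.
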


\begin{remark}
The small time constraint in Theorem~\ref{t.mainth} seems necessary. It was shown in \cite{GX} that $\EE[u(t,x)^2]$ is finite and admits a Feynman-Kac representation for small $t$, and we expect that $\EE[u(t,x)^2]=\infty$ when $t$ is large, in light of \eqref{e.exponentialintegrability}.
\end{remark}

\begin{remark}
Since the formal product $u\cdot\dot{W}$ in \eqref{eq:PAM} comes from the classical physical product $u_\eps\dot{W}_\eps$ in \eqref{e.pamsmooth}, we may interpret it in the Stratonovich's sense. If it is replaced by the Wick product: 
\begin{equation}\label{e.pamwick}
\partial_tu(t,x)=\frac12\Delta u(t,x)+u(t,x)\diamond \dot{W}(x),
\end{equation}
a different chaos expansion was proved in \cite{hu2002chaos}. Compared with \eqref{e.chaos}, the only difference is the lack of the weight $e^{\gamma(t,B)}$ in the definition of $\mathscr{F}$. This reduces the polymer measure to the original Wiener measure, in which case we have
\begin{equation}
\mathscr{F}_{s_1,\ldots,s_n}(x_1,\ldots,x_n)=q_{s_1}(x_1)q_{s_2-s_1}(x_2-x_1)\ldots q_{s_n-s_{n-1}}(x_n-x_{n-1}),
\end{equation}
where $q_t(x):=(2\pi t)^{-1}e^{-|x|^2/2t}$ is the standard heat kernel. With $\mu_1=\mu_2=F=0$, the expansion coefficient is given by
\begin{equation}
\begin{aligned}
&f_n(y_1,\ldots,y_n; t,x)\\
&=\int_{\R^2} \int_{[0,t]_<^n}u_0(x+z)\mathscr{F}_{s_1,\ldots,s_n,t}(y_1-x,\ldots,y_n-x,z)dsdz\\
&=\int_{[0,t]^n}\int_{\R^2}1_{\{0<s_n<\ldots<s_1<t\}}\prod_{j=0}^n q_{s_j-s_{j+1}}(y_j-y_{j+1})u_0(z)dzds,
\end{aligned}
\end{equation}
with the convention that $y_0=x,y_{n+1}=z,s_{n+1}=0$. Thus, the resulting chaos expansion is obtained by iterating the mild formulation of \eqref{e.pamwick}. 
The missing exponential weight $e^{\gamma(t,B)}$ favors self-attracting of the polymer paths, which prevails in the intermittency behaviors of parabolic Anderson model. We refer to the recent monograph \cite{konig} for more details.
\end{remark}

\begin{remark}
The same proof works in the one dimensional case, where the small time constraint can be removed, and there is no need to renormalize. A similar expansion coefficient as \eqref{e.excoeff} holds.
\end{remark}

\section{Proof of the main result}


 For fixed $t>0,x\in\R^2,\epsilon>0$ and each realization of the Brownian motion, we write the exponent in \eqref{e.fkre} as
\[
\begin{aligned}
\int_0^t \dot{W}_\eps(x+B_s)ds&=\int_0^t \int_{\R^2} \varphi_\eps(x+B_s-y)dW(y) ds\\
&=\int_{\R^2} \left(\int_0^t \varphi_\eps(x+B_s-y)ds\right)dW(y)\\
&=\int_{\R^2} \Phi_{t,x,B}^\eps(y)dW(y),
\end{aligned}
\]
 with 
\[
\Phi_{t,x,B}^\eps(y):= \int_0^t \varphi_\eps(x+B_s-y)ds.
\]
Then it is easy to see that $u_{\epsilon}(t,x) \in \mathbb{D}^{\infty,2}$, and
\begin{equation}
\begin{aligned}
D^nu_\eps(t,x)=&\EE_B\left[ u_0(x+B_t) D^n \exp\left(\int_{\R^2}\Phi^\eps_{t,x,B}(y)dW(y)-C_\eps t\right)\right]\\
=&\EE_B\left[u_0(x+B_t) \exp\left(\int_{\R^2}\Phi^\eps_{t,x,B}(y)dW(y)-C_\eps t\right) (\Phi^\eps_{t,x,B}(\cdot))^{\otimes n}\right].
\end{aligned}
\end{equation}
By the Stroock's formula \eqref{eq:Stroock formula}, we can write the Wiener chaos expansion of $u_\eps(t,x)$ as 
\begin{equation}\label{eq:chaos u epsilon}
u_{\epsilon}(t,x)= \sum_{n=0}^\infty I_n\left( f_{\epsilon, n}(\cdot; t,x)\right)\,,
\end{equation}
with 
\begin{equation}
\begin{aligned}
f_{\epsilon, n}(\cdot; t,x) = &\frac{1}{n!} \EE[D^n u_\eps(t,x)]\\
=&\frac{1}{n!} \EE_B \left[u_0(x + B_t) \exp\left(\int_0^t\int_0^s R_\eps(B_s-B_u)duds-C_\eps t\right)  \left(\Phi^\eps_{t,x,B}(\cdot)\right) ^{\otimes n}\right]\,.
 \end{aligned}
 \end{equation}
 By \eqref{e.remainder}, we define 
\begin{equation}
\begin{aligned}
r_\eps:=\int_0^t \int_0^s \EE_B[R_\eps(B_s-B_u)]duds-C_\eps t
 -t(\mu_1+\mu_2\log t),
\end{aligned}
\end{equation}
which goes to zero as $\eps\to0$, and rewrite
\begin{equation}
f_{\eps,n}(\cdot; t,x)=\frac{e^{t(\mu_1+\mu_2\log t)+r_\eps}}{n!}\EE_B\left[u_0(x+B_t) \exp(\gamma_\eps(t,B))  \left(\Phi^\eps_{t,x,B}(\cdot)\right) ^{\otimes n}\right].
\end{equation}

To prove Theorem~\ref{t.mainth}, it suffices to show that as $\eps\to0$,
\begin{equation}\label{e.conl2}
\sum_{n=0}^\infty n!\|  f_{\epsilon, n}(\cdot; t,x)-f_n(\cdot; t,x)\|^2_{L^2(\RR^2)^{\otimes n}}\to0.
\end{equation}
 Define 
\begin{equation}
\tilde{f}_{\eps,n}(\cdot; t,x):=\frac{e^{t(\mu_1+\mu_2\log t)+r_\eps}}{n!}\EE_B\left[u_0(x+B_t) \exp(\gamma(t,B))  \left(\Phi^\eps_{t,x,B}(\cdot)\right) ^{\otimes n}\right].
\end{equation}
Since 
\[
\begin{aligned}
&\|  f_{\epsilon, n}(\cdot; t,x)-f_n(\cdot; t,x)\|^2_{L^2(\RR^2)^{\otimes n}} \\
&\leq 2\|  f_{\epsilon, n}(\cdot; t,x)-\tilde{f}_{\eps,n}(\cdot; t,x)\|^2_{L^2(\RR^2)^{\otimes n}}+2\|  \tilde{f}_{\epsilon, n}(\cdot; t,x)-f_n(\cdot; t,x)\|^2_{L^2(\RR^2)^{\otimes n}}\,,
\end{aligned}
\]
the proof of \eqref{e.conl2} reduces to the following three lemmas.

\begin{lemma}\label{l.bound}
There exists $t_0,C>0$ independent of $\eps,n$ such that if $t<t_0$, 
\[
\|f_{\epsilon, n}(\cdot; t,x)\|_{L^2(\RR^2)^{\otimes n}}^2+\|\tilde{f}_{\eps,n}(\cdot; t,x)\|^2_{L^2(\RR^2)^{\otimes n}} +\|f_{n}(\cdot; t,x)\|^2_{L^2(\RR^2)^{\otimes n}}\leq \frac{(Ct)^n}{n!}.
\]
\end{lemma}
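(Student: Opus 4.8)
The plan is to turn each of the three squared norms into an expectation over two independent Brownian paths $B,\tilde B$ from the origin, in which the $n$-th Wiener chaos produces the $n$-th power of a mutual-intersection functional, and then to estimate that power by moment/exponential-moment bounds for intersection local times.

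\emph{Step 1 (two-path representation).} Set $M_\eps:=\langle\Phi^\eps_{t,x,B},\Phi^\eps_{t,x,\tilde B}\rangle_{L^2(\R^2)}=\int_0^t\!\!\int_0^t R_\eps(B_s-\tilde B_r)\,ds\,dr$. Expanding the definition of $f_{\eps,n}$ and using $\langle (\Phi^\eps_{t,x,B})^{\otimes n},(\Phi^\eps_{t,x,\tilde B})^{\otimes n}\rangle_{L^2(\R^2)^{\otimes n}}=M_\eps^{\,n}$ gives
\begin{equation}\label{e.twopath}
n!\,\|f_{\eps,n}(\cdot;t,x)\|^2_{L^2(\R^2)^{\otimes n}}=\frac{e^{2t(\mu_1+\mu_2\log t)+2r_\eps}}{n!}\,\EE_{B,\tilde B}\!\Big[u_0(x+B_t)u_0(x+\tilde B_t)\,e^{\gamma_\eps(t,B)+\gamma_\eps(t,\tilde B)}\,M_\eps^{\,n}\Big],
\end{equation}
and the same identity with $\gamma_\eps$ replaced by $\gamma$ holds for $\tilde f_{\eps,n}$. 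For $f_n$, writing $\|f_n\|^2$ out through the polymer occupation density $\mathscr F$ leads in the same way to
\[
n!\,\|f_n(\cdot;t,x)\|^2_{L^2(\R^2)^{\otimes n}}= n!\,e^{2t(\mu_1+\mu_2\log t)}\,\EE_{B,\tilde B}\!\Big[u_0(x+B_t)u_0(x+\tilde B_t)\,e^{\gamma(t,B)+\gamma(t,\tilde B)}\,\beta_t^{(n)}\Big],
\]
where $\beta_t^{(n)}:=\int_{[0,t]^n_<}\int_{[0,t]^n_<}\prod_{i=1}^n\delta(B_{s_i}-\tilde B_{s'_i})\,ds\,ds'$ is the time-ordered mutual intersection local time (made rigorous by mollification). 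All prefactors are bounded uniformly in $\eps$ for $t$ in a fixed bounded interval, since $r_\eps\to0$.

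\emph{Step 2 (the kernels $f_{\eps,n}$ and $\tilde f_{\eps,n}$).} Bound $|u_0|\le\|u_0\|_\infty$ and apply Cauchy--Schwarz in $(B,\tilde B)$ to \eqref{e.twopath}; using $B\perp\tilde B$ this yields $n!\,\|f_{\eps,n}\|^2\le C_0\,\|u_0\|_\infty^2\,n!^{-1}\,\EE_B[e^{2\gamma_\eps(t,B)}]\,\EE_{B,\tilde B}[M_\eps^{\,2n}]^{1/2}$. Two uniform-in-$\eps$ inputs finish the estimate. First, since the Fourier transform $\widehat{R_\eps}(\xi)=\hat\varphi(\eps\xi)^2$ lies in $[0,1]$, one has $\EE_{B,\tilde B}[\prod_i R_\eps(B_{s_i}-\tilde B_{r_i})]\le \EE_{B,\tilde B}[\prod_i\delta(B_{s_i}-\tilde B_{r_i})]$ for every configuration of times, hence $\EE[M_\eps^{2n}]\le\EE[\alpha_t^{2n}]$ with $\alpha_t:=\int_0^t\!\int_0^t\delta(B_s-\tilde B_r)\,ds\,dr$ the genuine mutual intersection local time; by Brownian scaling $\alpha_t\overset{d}{=}t\,\alpha_1$, and $\alpha_1$ has finite exponential moments near $0$ (classical; see \cite{chen2010random,le1994exponential}), so $\EE[\alpha_t^{2n}]\le C\,(2n)!\,(Ct)^{2n}$, uniformly in $\eps$. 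Second, for $t$ below some $t_0$, $\sup_\eps\EE_B[e^{2\gamma_\eps(t,B)}]<\infty$: this is the exponential integrability of the mollified renormalised self-intersection local time of planar Brownian motion, which follows — via the scaling $\gamma_\eps(t,B)\overset{d}{=}t\,\gamma_{\eps/\sqrt t}(1,\beta)$, so that small $t$ makes the exponent small — from the uniform-in-mollification bound underlying \eqref{e.exponentialintegrability}. Combining and using $(2n)!\le 4^n(n!)^2$ gives $n!\|f_{\eps,n}\|^2\le C(Ct)^n$, i.e.\ $\|f_{\eps,n}\|^2\le (Ct)^n/n!$ after relabelling $C$; the case $n=0$ is the trivial bound $|\EE u_\eps(t,x)|^2\le C$. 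The argument for $\tilde f_{\eps,n}$ is identical, now with the non-mollified weight $e^{2\gamma(t,B)}$, for which \eqref{e.exponentialintegrability} applies directly.

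\emph{Step 3 (the kernel $f_n$, and the main obstacle).} Here the relevant functional is $\beta_t^{(n)}$, whose first moment is computed exactly from heat kernels: for $0<s_1<\dots<s_n<t$ and $0<s'_1<\dots<s'_n<t$ (with $s_0=s'_0=0$),
\[
\EE_{B,\tilde B}\Big[\prod_{i=1}^n\delta(B_{s_i}-\tilde B_{s'_i})\Big]=\prod_{i=1}^n q_{(s_i-s_{i-1})+(s'_i-s'_{i-1})}(0)=\prod_{i=1}^n\frac{1}{2\pi\big((s_i-s_{i-1})+(s'_i-s'_{i-1})\big)}.
\]
Passing to the increment variables, using $\tfrac1{a+b}\le\tfrac1{2\sqrt{ab}}$, and invoking the Dirichlet integral $\int_{\{a_i\ge0,\,\sum a_i<t\}}\prod_i a_i^{-1/2}\,da=t^{n/2}\pi^{n/2}/\Gamma(1+n/2)$ together with $\Gamma(1+n/2)^2\ge c\,2^{-n}n!$ (Legendre duplication) yields $\EE_{B,\tilde B}[\beta_t^{(n)}]\le (Ct)^n/n!$; a similar, more elaborate heat-kernel computation gives the matching second-moment bound $\EE_{B,\tilde B}[(\beta_t^{(n)})^2]\le C(Ct)^{2n}/(n!)^2$. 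Hence, bounding $|u_0|\le\|u_0\|_\infty$ and using Cauchy--Schwarz with \eqref{e.exponentialintegrability}, $n!\|f_n\|^2\le C\,n!\,\EE_B[e^{2\gamma(t,B)}]\,\EE_{B,\tilde B}[(\beta_t^{(n)})^2]^{1/2}\le C(Ct)^n$, so $\|f_n\|^2\le (Ct)^n/n!$; alternatively, once the convergence $\tilde f_{\eps,n}\to f_n$ in $L^2(\R^2)^{\otimes n}$ is known, the bound for $f_n$ follows from that for $\tilde f_{\eps,n}$. I expect the genuine difficulty to lie not in the combinatorics above but in the uniform-in-$\eps$ exponential integrability $\sup_\eps\EE_B[e^{2\gamma_\eps(t,B)}]<\infty$: it rests on renormalised self-intersection local time estimates for the plane, delicate because the two-dimensional Brownian Green function diverges logarithmically — precisely the feature that forces both the renormalisation constant $C_\eps$ and the short-time restriction $t<t_0$.
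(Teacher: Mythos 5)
Your argument for $f_{\eps,n}$ and $\tilde f_{\eps,n}$ is exactly the paper's: the two-path representation, Cauchy--Schwarz to separate the exponential weight from $M_\eps^{\,n}$, the uniform inputs \eqref{e.a1}--\eqref{e.a2} (which you re-derive inline rather than cite), and $\sqrt{(2n)!}\le 2^n n!$ to absorb the factorials. For $f_n$ the paper simply transfers the bound from $\tilde f_{\eps,n}$ via the $L^2$-convergence of Lemma~\ref{l.con2} --- the fallback you mention at the end of Step 3 --- so your direct heat-kernel computation there (whose second-moment bound $\EE[(\beta_t^{(n)})^2]\le C(Ct)^{2n}/(n!)^2$ is asserted without proof) is not needed.
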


\begin{lemma}\label{l.con1}
There exists $t_0>0$ such that if $t<t_0$, 
\[
\|f_{\epsilon, n}(\cdot; t,x)-\tilde{f}_{\eps,n}(\cdot; t,x)\|^2_{L^2(\RR^2)^{\otimes n}}\to 0, \   \ \mbox{ as }  \eps\to0.
\]
\end{lemma}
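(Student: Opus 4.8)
The plan is to expand $\|f_{\epsilon,n}(\cdot;t,x)-\tilde{f}_{\eps,n}(\cdot;t,x)\|^2_{L^2(\RR^2)^{\otimes n}}$ by introducing an independent second Brownian motion, to observe that the kernel produced by the $n$-fold tensor inner product is non-negative, and then to reduce the convergence to an $L^p$-statement for $e^{\gamma_\eps(t,B)}-e^{\gamma(t,B)}$ in the single Brownian variable, handled by the almost sure convergence $\gamma_\eps\to\gamma$ from \eqref{e.nov162} together with a uniform-in-$\eps$ exponential integrability bound valid for small $t$.

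Write $\mu:=\mu_1+\mu_2\log t$ and $\Delta_\eps(B):=e^{\gamma_\eps(t,B)}-e^{\gamma(t,B)}$. From the definitions of $f_{\eps,n}$ and $\tilde{f}_{\eps,n}$,
\[
f_{\eps,n}(\cdot;t,x)-\tilde{f}_{\eps,n}(\cdot;t,x)=\frac{e^{t\mu+r_\eps}}{n!}\,\EE_B\Big[u_0(x+B_t)\,\Delta_\eps(B)\,\big(\Phi^\eps_{t,x,B}\big)^{\otimes n}\Big].
\]
Squaring the $L^2(\RR^2)^{\otimes n}$-norm and introducing an independent copy $\widetilde B$ of $B$ gives
\[
\|f_{\eps,n}-\tilde{f}_{\eps,n}\|^2_{L^2(\RR^2)^{\otimes n}}=\frac{e^{2t\mu+2r_\eps}}{(n!)^2}\,\EE_{B,\widetilde B}\Big[u_0(x+B_t)\,u_0(x+\widetilde B_t)\,\Delta_\eps(B)\,\Delta_\eps(\widetilde B)\,A_\eps(B,\widetilde B)^n\Big],
\]
where, by Fubini's theorem,
\[
A_\eps(B,\widetilde B):=\big\langle \Phi^\eps_{t,x,B},\Phi^\eps_{t,x,\widetilde B}\big\rangle_{L^2(\RR^2)}=\int_0^t\!\!\int_0^t R_\eps(B_s-\widetilde B_u)\,du\,ds\ \ge 0.
\]
Since $e^{2t\mu+2r_\eps}$ stays bounded as $\eps\to0$ and $|u_0|\le\|u_0\|_\infty$, it suffices (for fixed $n$) to show $\EE_{B,\widetilde B}\big[\,|\Delta_\eps(B)|\,|\Delta_\eps(\widetilde B)|\,A_\eps(B,\widetilde B)^n\,\big]\to0$.

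I would now apply H\"older's inequality with exponents $a,a,b$ satisfying $\tfrac{2}{a}+\tfrac{1}{b}=1$ and $a>2$: since $\Delta_\eps(B)$ and $\Delta_\eps(\widetilde B)$ are identically distributed and depend only on $B$, respectively $\widetilde B$,
\[
\EE_{B,\widetilde B}\big[|\Delta_\eps(B)|\,|\Delta_\eps(\widetilde B)|\,A_\eps(B,\widetilde B)^n\big]\le\Big(\EE_B\big[|\Delta_\eps(B)|^{a}\big]\Big)^{2/a}\Big(\EE_{B,\widetilde B}\big[A_\eps(B,\widetilde B)^{nb}\big]\Big)^{1/b}.
\]
Thus it is enough to establish \emph{(i)} $\sup_{\eps\in(0,1)}\EE_{B,\widetilde B}[A_\eps(B,\widetilde B)^{m}]<\infty$ for every $m\ge1$, and \emph{(ii)} $\EE_B[|\Delta_\eps(B)|^{a}]\to0$ as $\eps\to0$ for some $a>2$, provided $t$ is small. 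For \emph{(i)} one expands $\EE_{B,\widetilde B}[A_\eps^m]$ as an integral over $[0,t]^{2m}$ of $\EE[\prod_{i=1}^m R_\eps(B_{s_i}-\widetilde B_{u_i})]$; using $0\le R_\eps=\varphi_\eps\star\varphi_\eps$ with $\int R_\eps=1$ and the Gaussian transition densities of $B$ and $\widetilde B$, a standard computation yields a bound independent of $\eps$ (in $d=2$ the resulting time integrals, with integrable singularities of the type $|s+u|^{-1}$ near the origin, converge — this is the statement that the mollified mutual intersection local time of two independent planar Brownian motions has uniformly bounded moments; see e.g. \cite{chen2010random}). For \emph{(ii)}, \eqref{e.nov162} gives $\gamma_\eps(t,B)\to\gamma(t,B)$ almost surely, hence $|\Delta_\eps(B)|^{a}\to0$ almost surely; to upgrade to convergence in $L^1$ I would use $|\Delta_\eps(B)|^{a(1+\theta)}\le 2^{a(1+\theta)}\big(e^{a(1+\theta)\gamma_\eps(t,B)}+e^{a(1+\theta)\gamma(t,B)}\big)$ together with the uniform exponential integrability
\[
\sup_{\eps\in(0,1)}\EE_B\big[e^{\lambda\gamma_\eps(t,B)}\big]<\infty,\qquad \EE_B\big[e^{\lambda\gamma(t,B)}\big]<\infty,
\]
which hold for any fixed $\lambda>0$ once $t<t_0(\lambda)$; choosing $\lambda=a(1+\theta)$ with $a>2$ close to $2$ and $\theta>0$ small, the family $\{|\Delta_\eps(B)|^{a}\}_\eps$ is uniformly integrable and \emph{(ii)} follows. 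The second bound above is the content of \eqref{e.exponentialintegrability} (after a Brownian-scaling argument to pass from $\EE_B[e^{\gamma(t,B)}]<\infty$ to arbitrary $\lambda$ and small $t$), and its mollified counterpart follows from the same, $\eps$-uniform, moment estimates.

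\textbf{Main obstacle.} The one genuinely delicate ingredient is the uniform-in-$\eps$ exponential integrability of the renormalized mollified self-intersection local time, $\sup_{\eps}\EE_B[e^{\lambda\gamma_\eps(t,B)}]<\infty$ for $t<t_0(\lambda)$. Equation \eqref{e.exponentialintegrability} records the corresponding fact for the limit $\gamma(t,B)$ only, whereas here one needs a single smallness threshold that works simultaneously for the whole family $\{\gamma_\eps\}_{\eps\in(0,1)}$; this is exactly where the two-dimensional logarithmic renormalization has to be controlled with care, and it is what forces the short-time restriction $t<t_0$ in the lemma. Everything else — the reduction to \emph{(i)}–\emph{(ii)}, the polynomial moment bound \emph{(i)}, and the passage from the almost sure convergence of $\gamma_\eps$ to the $L^a$-convergence of $\Delta_\eps$ — is routine.
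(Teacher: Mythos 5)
Your proposal is correct and follows essentially the same route as the paper: both expand the squared norm over two independent Brownian motions to arrive at the expectation of $|\Delta_\eps(B^1)|\,|\Delta_\eps(B^2)|\bigl(\int_0^t\int_0^t R_\eps(B^1_s-B^2_u)\,ds\,du\bigr)^n$, and both conclude from the almost sure convergence $\gamma_\eps\to\gamma$ combined with uniform exponential integrability and uniform moment bounds (the paper phrases the conclusion as convergence in probability plus uniform integrability rather than your H\"older factorization, which is only a cosmetic difference). The ``main obstacle'' you flag, $\sup_{\eps}\EE_B[e^{\lambda\gamma_\eps(t,B)}]<\infty$ for small $t$, is precisely the paper's appendix estimate \eqref{e.a1}, quoted from \cite[Lemma A.1]{GX}, so your argument is complete at the same level of rigor as the paper's.
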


\begin{lemma}\label{l.con2}
There exists $t_0>0$ such that if $t<t_0$, 
\[
\|\tilde{f}_{\eps,n}(\cdot; t,x)-f_n(\cdot; t,x)\|^2_{L^2(\RR^2)^{\otimes n}}\to 0, \   \  \mbox{ as } \eps\to0.
\]
\end{lemma}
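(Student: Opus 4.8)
The plan is to recognize $\tilde f_{\eps,n}$ as a mollification of the limiting kernel $f_n$ by the approximate identity $\varphi_\eps^{\otimes n}$, up to a scalar factor tending to $1$, so that the lemma reduces to the $L^2$-continuity of convolution; the only external input will be that $f_n\in L^2$, which is Lemma~\ref{l.bound}.

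First rewrite $\tilde f_{\eps,n}$ using $\EE_B[\,\cdot\, e^{\gamma(t,B)}]=e^{F(t)}\hat\EE_{t,B}[\,\cdot\,]$ together with $(\Phi^\eps_{t,x,B}(\cdot))^{\otimes n}(y_1,\dots,y_n)=\int_{[0,t]^n}\prod_{j=1}^n\varphi_\eps(x+B_{s_j}-y_j)\,ds$:
\[
\tilde f_{\eps,n}(y;t,x)=\frac{c_\eps e^{F(t)}}{n!}\int_{[0,t]^n}\hat\EE_{t,B}\Big[u_0(x+B_t)\prod_{j=1}^n\varphi_\eps(x+B_{s_j}-y_j)\Big]\,ds,\qquad c_\eps:=e^{t(\mu_1+\mu_2\log t)+r_\eps}.
\]
For $s$ with pairwise distinct coordinates the inner expectation equals $\int_{\R^2}u_0(x+z)\,\big(\varphi_\eps^{\otimes n}\star p_{s_1,\dots,s_n,t}(\cdot,z)\big)(y_1-x,\dots,y_n-x)\,dz$, where $p_{s_1,\dots,s_n,t}$ is the joint density of $(B_{s_1},\dots,B_{s_n},B_t)$ under $\hat\PP_{t,B}$ (the extension of $\mathscr{F}$ to arbitrary time orderings, so $p_{s_{\pi(1)},\dots,s_{\pi(n)},t}(a_{\pi(1)},\dots,a_{\pi(n)},z)=p_{s_1,\dots,s_n,t}(a_1,\dots,a_n,z)$), and $\star$ acts on the first $n$ spatial variables. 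Splitting $[0,t]^n$ into its $n!$ monotone cells and renaming variables — exactly the manipulation that turns the Feynman--Kac representation into \eqref{e.excoeff}, now with the mollifier along for the ride — yields
\[
\tilde f_{\eps,n}(\cdot;t,x)=\frac{c_\eps}{c_0}\,\varphi_\eps^{\otimes n}\star f_n(\cdot;t,x),\qquad c_0:=e^{t(\mu_1+\mu_2\log t)},
\]
where $f_n$ is the coefficient \eqref{e.excoeff}, symmetrized in $(y_1,\dots,y_n)$ (only its symmetric part enters $I_n$, so this is immaterial for \eqref{e.conl2}).

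Granting the identity, Lemma~\ref{l.bound} gives $\|f_n(\cdot;t,x)\|^2_{L^2(\R^2)^{\otimes n}}\le(Ct)^n/n!<\infty$ for $t<t_0$, so $f_n\in L^2(\R^{2n})$. Since $\|\varphi_\eps^{\otimes n}\|_{L^1(\R^{2n})}=\|\varphi\|_{L^1}^n=1$ and $\varphi_\eps^{\otimes n}$ is an approximate identity on $\R^{2n}$, Young's inequality together with density of compactly supported continuous functions give $\|\varphi_\eps^{\otimes n}\star f_n-f_n\|_{L^2}\to0$, while $r_\eps\to0$ by \eqref{e.remainder} gives $c_\eps/c_0\to1$; hence
\[
\|\tilde f_{\eps,n}-f_n\|_{L^2(\R^2)^{\otimes n}}\le\frac{|c_\eps|}{|c_0|}\,\|\varphi_\eps^{\otimes n}\star f_n-f_n\|_{L^2}+\Big|\frac{c_\eps}{c_0}-1\Big|\,\|f_n\|_{L^2}\longrightarrow 0.
\]

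The estimate itself is soft; the delicate point, which I expect to be the main obstacle, is justifying the identity $\tilde f_{\eps,n}=(c_\eps/c_0)\,\varphi_\eps^{\otimes n}\star f_n$ — that is, interchanging $\hat\EE_{t,B}$ with the $ds$-integral over $[0,t]^n$, the $dz$-integral against $u_0(x+z)$, and the convolution — given that $\mathscr{F}_{s_1,\dots,s_n,t}$ is a priori only a jointly measurable $L^1$ density. This is precisely where Lemma~\ref{l.bound} enters a second time: its proof supplies the finite bound $\int_{[0,t]_<^n}\!\int_{\R^2}|u_0(x+z)|\,\|\mathscr{F}_{s_1,\dots,s_n,t}(\cdot,z)\|_{L^2(\R^{2n})}\,dz\,ds<\infty$ (a constant multiple of which controls $\|f_n\|_{L^2}$ via Minkowski's integral inequality), legitimizing every Fubini step; the same quantity is an $L^1(ds\,dz)$-majorant, so together with the a.e.\ convergence $\varphi_\eps^{\otimes n}\star\mathscr{F}_{s,t}(\cdot,z)\to\mathscr{F}_{s,t}(\cdot,z)$ in $L^2$ in the spatial variables, one may alternatively reach the conclusion by dominated convergence applied directly to $\|\tilde f_{\eps,n}-f_n\|_{L^2}$, bypassing the global-convolution shortcut.
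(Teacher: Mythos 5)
There is a genuine gap: your argument is circular. You invoke Lemma~\ref{l.bound} for the bound $\|f_n(\cdot;t,x)\|^2_{L^2(\R^2)^{\otimes n}}\le (Ct)^n/n!$, but in the paper the $f_n$ part of Lemma~\ref{l.bound} is itself deduced \emph{from} Lemma~\ref{l.con2} (the proof of Lemma~\ref{l.bound} only carries out the Feynman--Kac computation for $f_{\eps,n}$ and $\tilde f_{\eps,n}$, and then says the estimate for $f_n$ follows ``by Lemma~\ref{l.con2}''). So the fact that $f_n\in L^2(\R^{2n})$ --- the one external input your soft convolution argument needs --- is exactly what is not available before Lemma~\ref{l.con2} is proved. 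Your attempted patch does not close this: you assert that the proof of Lemma~\ref{l.bound} ``supplies'' the bound $\int_{[0,t]_<^n}\int_{\R^2}|u_0(x+z)|\,\|\mathscr{F}_{s_1,\dots,s_n,t}(\cdot,z)\|_{L^2(\R^{2n})}\,dz\,ds<\infty$, but that proof never touches $\mathscr{F}$ at all; nothing in the paper establishes that the tilted density $\mathscr{F}_{s_1,\dots,s_n,t}(\cdot,z)$ is square integrable in its spatial variables for fixed times, let alone with an integrable bound in $(s,z)$. The same unproven hypothesis underlies your claimed a.e.\ convergence $\varphi_\eps^{\otimes n}\star\mathscr{F}_{s,t}(\cdot,z)\to\mathscr{F}_{s,t}(\cdot,z)$ in $L^2$. (Your identity $\tilde f_{\eps,n}=e^{r_\eps}\,\varphi_\eps^{\otimes n}\star f_n$ is fine and matches \eqref{e.nov161}; the problem is purely the missing $L^2$ membership of the limit.)

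The paper circumvents this by splitting the lemma into two steps. First it shows $\{\tilde f_{\eps,n}\}$ is Cauchy in $L^2(\R^2)^{\otimes n}$ \emph{without any reference to} $\mathscr{F}$: expanding $\langle \tilde f_{\eps_1,n},\tilde f_{\eps_2,n}\rangle$ via Feynman--Kac reduces the claim to the convergence of $\EE_{B^1,B^2}\bigl[\prod_j u_0(x+B^j_t)e^{\gamma(t,B^j)}\bigl(\int_{[0,t]^2}R_{\eps_1,\eps_2}(B^1_s-B^2_u)\,ds\,du\bigr)^n\bigr]$, which follows from the mutual-intersection-local-time estimates \eqref{e.a1}--\eqref{e.a3} of Lemma~\ref{l.localtime}. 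This produces \emph{some} $L^2$ limit. Second, the approximate-identity argument you have (applied only in $L^1$, where $\mathscr{G}\in L^1$ is all that is known) identifies that limit as $f_n$, and only then does one conclude $f_n\in L^2$. To repair your proof you would either need to adopt this two-step structure, or supply an independent proof that $f_n\in L^2(\R^{2n})$ (e.g.\ via Fatou applied to $\|\varphi_\eps^{\otimes n}\star f_n\|_{L^2}\le\liminf$ of the uniform bounds on $\|\tilde f_{\eps,n}\|_{L^2}$ along a subsequence converging a.e.), which again amounts to extracting the limit from the $\eps$-family rather than from properties of $\mathscr{F}$.
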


In the following, we use the notation $a\les b$ when $a\leq Cb$ for some constant $C>0$ independent of $\eps,n$.

\noindent\emph{Proof of Lemma~\ref{l.bound}}. The proof of $f_{\eps,n}$ and $\tilde{f}_{\eps,n}$ is the same. Take $f_{\eps,n}$ for example:
\[
\begin{aligned}
&\|f_{\epsilon, n}(\cdot; t,x)\|_{L^2(\RR^2)^{\otimes n}}^2\\
&\les \frac{1}{(n!)^2} \int_{\RR^{2n}}\EE_{B^1,B^2} \left[\prod_{j=1}^2 \left(e^{\gamma_\eps(t,B^j)} \prod_{k=1}^n \Phi^\eps_{t,x,B^j}(y_k)\right)\right] dy,
\end{aligned}
\]
where $B^1,B^2$ stand for independent Brownian motions. Performing the integral in the $y$ variable, the r.h.s. of the above display is bounded by 
\[
\frac{1}{(n!)^2}  \EE_{B^1,B^2} \left[ e^{\gamma_\eps(t,B^1)+\gamma_\eps(t,B^2)} \left( \int_{[0,t]^2} R_\eps(B^1_s-B^2_u)dsdu\right)^n \right].
\]
Now we use Cauchy-Schwarz inequality and \eqref{e.a1}-\eqref{e.a2} to derive
\[
\begin{aligned}
&\EE_{B^1,B^2} \left[ e^{\gamma_\eps(t,B^1)+\gamma_\eps(t,B^2)} \left( \int_{[0,t]^2} R_\eps(B^1_s-B^2_u)dsdu\right)^n \right] \\
&\leq \sqrt{\EE_{B^1,B^2}\left[ e^{2\gamma_\eps(t,B^1)+2\gamma_\eps(t,B^2)}\right]\EE_{B^1,B^2}\left[\left( \int_{[0,t]^2} R_\eps(B^1_s-B^2_u)dsdu\right)^{2n}\right]} \\
&\les (Ct)^{n}\sqrt{(2n)!}\,.
\end{aligned}
\]
An application of Stirling's approximation yields the desired result. By Lemma~\ref{l.con2}, the same estimate holds for $f_n$. The proof is complete.

\medskip

\noindent\emph{Proof of Lemma~\ref{l.con1}}. By the same discussion as in the proof of Lemma~\ref{l.bound}, we have 
\[
\begin{aligned}
&\|f_{\epsilon, n}(\cdot; t,x)-\tilde{f}_{\eps,n}(\cdot; t,x)\|^2_{L^2(\RR^2)^{\otimes n}} \\
&\leq \frac{1}{(n!)^2}\EE_{B^1,B^2} \left[\left|(e^{\gamma_\eps(t,B^1)}-e^{\gamma(t,B^1)})(e^{\gamma_\eps(t,B^2)}-e^{\gamma(t,B^2)})\right| \left(\int_0^t\int_0^t R_\eps(B^1_s-B^2_u)dsdu\right)^n\right].
\end{aligned}
\]
By the fact that $\gamma_\eps\to \gamma$ a.s. as $\eps\to0$ and \eqref{e.a3}, we know that the random variable inside the above expectation converges to zero in probability. The uniform integrability is guaranteed by \eqref{e.a1} and \eqref{e.a2}. Thus, the r.h.s. of the above display goes to zero as $\eps\to0$.
\medskip

\noindent\emph{Proof of Lemma~\ref{l.con2}}. First, we claim that $\tilde{f}_{\eps,n}(\cdot;t,x)$ is a Cauchy sequence in $L^2(\RR^2)^{\otimes n}$. It suffices to prove the convergence of 
\begin{equation}\label{e.coneps12}
\lim_{\eps_1,\eps_2\to0}\langle  \tilde{f}_{\eps_1,n}(\cdot;t,x), \tilde{f}_{\eps_2,n}(\cdot; t,x)\rangle_{L^2(\RR^2)^{\otimes n}}.
\end{equation}
 By applying Lemma~\ref{l.localtime}, we have
\[
\EE_{B^1,B^2}\left[\prod_{j=1}^2 u_0(x+B^j_t)e^{\gamma(t,B^j)}\left(\int_0^t\int_0^t R_{\eps_1,\eps_2}(B^1_s-B^2_u)dsdu\right)^n\right]
\]
converges as $\eps_1,\eps_2\to0$, where $R_{\eps_1,\eps_2}:=\varphi_{\eps_1}\star\varphi_{\eps_2}$. This proves \eqref{e.coneps12}.

Next, we show that $\tilde{f}_{\eps,n}(\cdot;t,x)\to f_n(\cdot;t,x)$ in $L^1(\R^2)^{\otimes n}$ which implies that $f_n(\cdot;t,x) \in L^2(\R^2)^{\otimes n}$ and completes the proof. We have 
\begin{equation}\label{e.nov161}
\begin{aligned}
&\tilde{f}_{\eps,n}(y_1,\ldots,y_n; t,x)\\
&=\frac{e^{t(\mu_1+\mu_2\log t)+r_\eps}}{n!}\EE_B\left[u_0(x+B_t) e^{\gamma(t,B)} \prod_{k=1}^n \Phi^\eps_{t,x,B}(y_k)\right]\\
&=\frac{e^{t(\mu_1+\mu_2\log t)+F_t+r_\eps}}{n!}\hat{\EE}_{t,B}\left[u_0(x+B_t) \prod_{k=1}^n \Phi^\eps_{t,x,B}(y_k)\right]\\
&=e^{t(\mu_1+\mu_2\log t)+F_t+r_\eps}\varphi_\eps^{\otimes n}\star \mathscr{G}(y_1-x,\ldots,y_n-x),
\end{aligned}
\end{equation}
with
\[
\mathscr{G}(z_1,\ldots,z_n):=\int_{\R^2} \int_{[0,t]_<^n}u_0(x+z_{n+1})\mathscr{F}_{s_1,\ldots,s_n,t}(z_1,\ldots,z_n,z_{n+1})dsdz_{n+1}\in L^1(\R^2)^{\otimes n}.
\]
Since $\varphi_\eps^{\otimes n}$ is an approximation to identity, by the classical convolution theorem,
\[
\varphi_\eps^{\otimes n}\star \mathscr{G}\to\mathscr{G} \mbox{ in } L^1(\R^2)^{\otimes n}\,.
\]
Thus,
\[
\begin{aligned}
\tilde{f}_{\eps,n}(y_1,\ldots,y_n; t,x)\to& e^{t(\mu_1+\mu_2\log t)+F_t} \mathscr{G}(y_1-x,\ldots,y_n-x)\\
&=f_n(y_1,\ldots,y_n; t,x)
\end{aligned}
\]
in $L^1(\R^2)^{\otimes n}$. 

\appendix
\section{Technical lemmas}
\subsection{Measurability of $\mathscr{F}$}

We show that $\mathscr{F}_{s_1,\ldots,s_n}(x_1,\ldots,x_n)$ is jointly measurable in the $(s,x)$ variable. Fix any $0<s_1<\ldots<s_n\leq t$, consider 
\[
\begin{aligned}
\mathscr{F}^\eps_{s_1,\ldots,s_n}(x_1,\ldots,x_n)&:=\hat{\EE}_{t,B}\left[\prod_{j=1}^n\varphi_\eps(B_{s_j}-x_j)\right]\\
&=\int_{\R^{2n}}\prod_{j=1}^n \varphi_\eps(y_j-x_j)\mathscr{F}_{s_1,\ldots,s_n}(y_1,\ldots,y_n)dy.
\end{aligned}
\]
The last integral converges in $L^1(\R^{2n})$ to $\mathscr{F}_{s_1,\ldots,s_n}$. It is clear that $\mathscr{F}^\eps_{s_1,\ldots,s_n}(x_1,\ldots,x_n)$ is continuous in both $s$ and $x$ variable, hence it is measurable. If we can show $\mathscr{F}^\eps_{s_1,\ldots,s_n}(x_1,\ldots,x_n)$ converges in $L^1([0,t]_<^n\times \R^{2n})$ to some $g_{s_1,\ldots,s_n}(x_1,\ldots,x_n)$, then $g=\mathscr{F}$ almost everywhere in $[0,t]_<^n\times \R^{2n}$, which implies $\mathscr{F}$ is measurable.

 For fixed $s_1,\ldots,s_n$, we have 
\[
\int_{\R^{2n}} |\mathscr{F}^\eps_{s_1,\ldots,s_n}(x_1,\ldots,x_n)-\mathscr{F}^\delta_{s_1,\ldots,s_n}(x_1,\ldots,x_n)|dx\to0
\]
as $\eps,\delta\to0$. In addition,
\[
\begin{aligned}
&\int_{\R^{2n}} |\mathscr{F}^\eps_{s_1,\ldots,s_n}(x_1,\ldots,x_n)-\mathscr{F}^\delta_{s_1,\ldots,s_n}(x_1,\ldots,x_n)|dx\\
&\leq \int_{\R^{2n}} \left(\mathscr{F}^\eps_{s_1,\ldots,s_n}(x_1,\ldots,x_n)+\mathscr{F}^\delta_{s_1,\ldots,s_n}(x_1,\ldots,x_n)\right)dx =2.
\end{aligned}
\]
Thus, by the dominated convergence theorem, we have 
\[
\int_{[0,t]_<^n} \int_{\R^{2n}} |\mathscr{F}^\eps_{s_1,\ldots,s_n}(x_1,\ldots,x_n)-\mathscr{F}^\delta_{s_1,\ldots,s_n}(x_1,\ldots,x_n)|dxds\to0
\]
as $\eps,\delta\to0$. This completes the proof.

\subsection{Estimates on intersection local time}
We collect some standard estimates on the intersection local time of planar Brownian motion. Recall that $R_{\eps_1,\eps_2}=\varphi_{\eps_1}\star \varphi_{\eps_2}$, and assume that the Brownian motion is built on the probability space $(\Sigma, \mathcal{A},\mathbb{P}_B)$.
\begin{lemma}\label{l.localtime}
For any $\lambda>0$, there exist constants $C,t_0>0$ such that 
\begin{equation}\label{e.a1}
\sup_{\eps\in(0,1],t \in[0,t_0]}\EE_B[e^{\lambda \gamma_\eps(t,B)}]\leq C\,,
\end{equation}
and for all $n\in \mathbb{N}$,
\begin{equation}\label{e.a2}
\sup_{\eps_1,\eps_2\in(0,1]}\EE_{B^1,B^2}\left[\left(\int_{[0,t]^2} R_{\eps_1,\eps_2}(B^1_s-B^2_u)dsdu\right)^n\right] \leq n!(Ct)^n\,.
\end{equation}
In addition, 
\begin{equation}\label{e.a3}
\int_{[0,t]^2} R_{\eps_1,\eps_2}(B^1_s-B^2_u)dsdu\to \int_{[0,t]^2} \delta(B^1_s-B^2_u)dsdu
\end{equation}
in $L^2(\Sigma)$ as $\eps_1,\eps_2\to0$, where the r.h.s. is the so-called mutual intersection local time of planar Brownian motions.
\end{lemma}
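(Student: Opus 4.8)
The plan is to prove the three estimates in Lemma~\ref{l.localtime} using the standard machinery for planar Brownian self-intersection local time, and I expect the exponential integrability \eqref{e.a1} to be the main obstacle since it is genuinely a non-trivial result rather than a moment computation.

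For \eqref{e.a1}, I would first reduce to the diagonal case: since $\gamma_\eps(t,B)$ is the centered double integral $\int_0^t\int_0^s [R_\eps(B_s-B_u)-\EE_B R_\eps(B_s-B_u)]\,du\,ds$, and $R_\eps=\varphi_\eps\star\varphi_\eps$, I want a bound uniform in $\eps\in(0,1]$ and $t\in[0,t_0]$. The cleanest route is to invoke the known exponential integrability of the renormalized self-intersection local time $\gamma(t,B)$ from \eqref{e.exponentialintegrability} (due to Le Gall, and Bass--Chen for the precise constants), together with an approximation argument: one shows $\sup_{\eps}\EE_B[e^{\lambda\gamma_\eps(t,B)}]\le \EE_B[e^{\lambda\gamma(t,B)}]$ up to multiplicative constants, for $t$ below some $t_0$ depending on $\lambda$. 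Concretely, I would use the scaling property of Brownian motion to write $\gamma_\eps(t,B)\overset{d}{=}\gamma_{\eps/\sqrt{t}}(1,B) + (\text{deterministic }O(t\log t))$, so that smallness of $t$ can be traded against largeness of $\lambda$; then a subadditivity/Markov-property decomposition of $[0,t]$ into dyadic blocks, as in Le Gall's argument, gives a bound of the form $\EE_B[\exp(\lambda\gamma_\eps(t,B))]\le \exp(c\lambda^2 t\log(1/t))$ or similar, uniformly in $\eps$, once $t$ is small enough that the relevant series converges. The uniformity in $\eps$ comes from the fact that all the constants in the block estimates are controlled by $\|R_\eps\|$ bounds that are $\eps$-uniform after centering.

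For \eqref{e.a2}, this is a pure moment estimate. Expanding the $n$-th power, $\EE_{B^1,B^2}[(\int_{[0,t]^2}R_{\eps_1,\eps_2}(B^1_s-B^2_u)\,ds\,du)^n]$ becomes $n!\int_{[0,t]_<^n\times[0,t]^n}\EE[\prod R_{\eps_1,\eps_2}(B^1_{s_i}-B^2_{u_i})]$ after ordering the $s$-variables, and using $R_{\eps_1,\eps_2}(z)=\int q_{a}(z-w)\,\nu(dw)$ type bounds (or directly $R_{\eps_1,\eps_2}\le$ a mollified heat kernel) one reduces to the Gaussian chaining estimate $\int_{[0,t]_<^n}\prod_i q_{s_i-s_{i-1}}(\cdot)\le$ a convolution that integrates to $(Ct)^{n}/\Gamma$-type factor; the standard outcome is the bound $n!(Ct)^n$. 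The key analytic input is that $\int_{\R^2} R_{\eps_1,\eps_2}(x)\,dx=1$ and that $R_{\eps_1,\eps_2}$ is dominated in a suitable averaged sense by the heat kernel at a comparable time scale, so that the Brownian occupation-measure bound $\EE_B[\int_0^t q_a(B_s-y)\,ds]\lesssim \log(t/a)$ can be iterated — this logarithmic (rather than polynomial) singularity is exactly what keeps the constant $\eps$-uniform.

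Finally, for \eqref{e.a3}, the $L^2(\Sigma)$ convergence follows by computing $\EE_{B^1,B^2}[(\int R_{\eps_1,\eps_2}-\int R_{\eps_1',\eps_2'})^2]$ and showing it is Cauchy: this expands into finitely many terms of the form $\int\int \EE[R_{\ast}(B^1_{s_1}-B^2_{u_1})R_{\ast\ast}(B^1_{s_2}-B^2_{u_2})]$, each of which converges as the mollification parameters go to zero because the joint density of $(B^1_{s_1}-B^2_{u_1}, B^1_{s_2}-B^2_{u_2})$ is in $L^p_{loc}$ for $p$ close to $1$ in $d=2$ (a standard fact), so convolving against approximate identities converges. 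The limit identifies the mutual intersection local time, whose $L^2$ existence in the plane is classical. I would state \eqref{e.a3} as a consequence of these computations and cite Le Gall and the monograph \cite{chen2010random} for the details. The genuinely hard step remains \eqref{e.a1}; \eqref{e.a2} and \eqref{e.a3} are variations on textbook computations, so I would present them briefly and allocate most of the writing to the $\eps$-uniform exponential bound and its reduction to the known result for $\gamma(t,B)$.
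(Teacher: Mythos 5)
Your overall division of labour matches the paper's: \eqref{e.a1} is the substantive estimate, while \eqref{e.a2} and \eqref{e.a3} are routine. But the paper does not reprove \eqref{e.a1} or \eqref{e.a2} at all: it cites \cite[Lemma A.1]{GX} both for the uniform exponential integrability and for the comparison $\sup_\eps\EE[(\int R_\eps(B^1_s-B^2_u))^n]\le\EE[(\int\delta(B^1_s-B^2_u))^n]\le n!(Ct)^n$, and only writes out an argument for \eqref{e.a3} (via the Fourier transform and dominated convergence, rather than your $L^p_{\mathrm{loc}}$-density argument; both routes work). Measured against that, your proposal attempts much more, and the part you attempt for \eqref{e.a1} has genuine gaps. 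First, the scaling identity is wrong as written: Brownian scaling gives $\gamma_\eps(t,B)\overset{d}{=}t\,\gamma_{\eps/\sqrt t}(1,B)$ exactly --- the multiplicative factor $t$ is essential (it is precisely what lets you trade small $t$ against large $\lambda$), and there is no additive deterministic $O(t\log t)$ term because both sides are already centered. Second, and more seriously, the statement you say ``one shows'', namely $\sup_\eps\EE_B[e^{\lambda\gamma_\eps(t,B)}]\lesssim\EE_B[e^{\lambda\gamma(t,B)}]$, \emph{is} the content of the lemma: after the centering there is no Jensen or Fourier-positivity argument that dominates $\gamma_\eps$ by $\gamma$, so this comparison cannot be waved through. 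One must either run Le Gall's triadic decomposition uniformly in $\eps$ or control $\EE_B[|\gamma_\eps-\gamma|^p]$ with explicit growth in $p$; your proposal restates the goal rather than supplying such an argument.

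For \eqref{e.a2} the mechanism you describe is also off: iterating the occupation bound $\EE_B[\int_0^t q_a(B_s-y)\,ds]\lesssim\log(t/a)$ with $a\asymp\eps^2$ produces a bound that diverges as $\eps\to0$, not an $\eps$-uniform one. The uniformity in $\eps$ comes from domination by the $\delta$ kernel (for instance via $|\hat\varphi(\eps_1\xi)\hat\varphi(\eps_2\xi)|\le1$ in the Fourier representation of the moments), which is exactly how \cite[Lemma A.1]{GX} proceeds and what the paper invokes. Your treatment of \eqref{e.a3} is correct and is a legitimate alternative to the paper's Fourier computation.
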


\begin{proof}
The uniform exponential integrability \eqref{e.a1} is shown in \cite[Lemma A.1]{GX}.  It also contains a moment estimate of the form
\[
\begin{aligned}
&\sup_{\eps\in(0,1]}\EE_{B^1,B^2}\left[\left(\int_{[0,t]^2} R_{\eps}(B^1_s-B^2_u)dsdu\right)^n\right]\\
& \leq \EE_{B^1,B^2}\left[\left(\int_{[0,t]^2} \delta(B^1_s-B^2_u)dsdu\right)^n\right]  \leq n!(Ct)^n.
\end{aligned}
\]
The same proof leads to \eqref{e.a2}. 

Since 
\[
\int_{[0,t]^2}R_\eps(B^1_s-B^2_u)dsdu\to \int_{[0,t]^2} \delta(B^1_s-B^2_u)dsdu
\]
in $L^2(\Sigma)$, to prove \eqref{e.a3}, it suffices to show that as $\eps_1,\eps_2\to0$,
\[
\int_{[0,t]^2} R_{\eps_1,\eps_2}(B^1_s-B^2_u)dsdu-\int_{[0,t]^2} R_{\eps_1}(B^1_s-B^2_u)dsdu\to0
\]
in $L^2(\Sigma)$, which reduces to the convergence of 
\[
\EE_{B^1,B^2}\left[\int_{[0,t]^4}R_{\eps_1,\eps_2}(B^1_{s_1}-B^2_{u_1}) R_{\eps_3,\eps_4}(B^1_{s_2}-B^2_{u_2})dsdu\right]
\]
as $\eps_j\to0, j=1,2,3,4$. 
We write $R_{\eps_i,\eps_j}$ in the Fourier domain so that the above expectation equals to 
\[
\frac{1}{(2\pi)^4}\int_{[0,t]^4}\int_{\R^4} \hat{\varphi}(\eps_1\xi)\hat{\varphi}(\eps_2\xi) \hat{\varphi}(\eps_3\eta)\hat{\varphi}(\eps_4\eta)\EE_{B^1,B^2}[e^{i\xi\cdot (B^1_{s_1}-B^2_{u_1})} e^{i\eta\cdot (B^1_{s_2}-B^2_{u_2})}]d\xi d\eta dsdu \, .
\]
It suffices to use the bound
\[
\int_{[0,t]^4}\int_{\R^{4}}\EE_{B^1,B^2}[e^{i\xi\cdot (B^1_{s_1}-B^2_{u_1})} e^{i\eta\cdot (B^1_{s_2}-B^2_{u_2})}]d\xi d\eta dsdu<\infty
\]
and the dominated convergence theorem to complete the proof.
\end{proof}

\subsection*{Acknowledgments}   We would like to thank the anonymous referee for a very careful reading of the manuscript and many useful suggestions that helped improve the presentation. YG is partially supported by the NSF through DMS-1613301.


\begin{thebibliography}{99}



\bibitem{allez2015continuous}
{\sc R.~Allez and K.~Chouk}, {\em The continuous {A}nderson hamiltonian in
  dimension two}, arXiv preprint arXiv:1511.02718,  (2015).

\bibitem{cannizzaro2015multidimensional}
{\sc G.~Cannizzaro and K.~Chouk}, {\em Multidimensional {SDE}s with singular
  drift and universal construction of the polymer measure with white noise
  potential}, to appear in Annals of Probability, (2015).
  
  \bibitem{caravenna2013polynomial}
{\sc F.~Caravenna, R.~Sun, and N.~Zygouras}, {\em Polynomial chaos and scaling limits of disordered systems},
J. Eur. Math. Soc. 19 (2017), 1-65.

\bibitem{chen2010random}
{\sc X.~Chen}, {\em Random walk intersections: Large deviations and related
  topics}, no.~157, American Mathematical Soc., 2010.
  
  \bibitem{chouk2016invariance}
{\sc K.~Chouk, J.~Gairing, and N.~Perkowski}, {\em An invariance principle for
  the two-dimensional parabolic {A}nderson model with small potential}, Stoch PDE: Anal Comp (2017) 5: 520. 
  
    \bibitem{GX}
{\sc Y.~Gu and W.~Xu}, {\em Moments of 2D parabolic Anderson model}, to appear in Asymptotic Analysis, (2017).

  
\bibitem{gubinelli2015paracontrolled}
{\sc M.~Gubinelli, P.~Imkeller, and N.~Perkowski}, {\em Paracontrolled
  distributions and singular PDEs}, in Forum of Mathematics, Pi, vol.~3,
  Cambridge Univ Press, 2015, p.~e6.
  
 
\bibitem{hairer2014theory}
{\sc M.~Hairer}, {\em A theory of regularity structures}, Inventiones
  mathematicae, 198 (2014), pp.~269--504.


\bibitem{hairer2015simple}
{\sc M.~Hairer and C.~Labb{\'e}}, {\em A simple
  construction of the continuum parabolic Anderson model on $\mathbf{R}^{2}$},
  Electronic Communications in Probability, 20 (2015).
  
  \bibitem{hu2002chaos}
{\sc Y.~Hu}, {\em Chaos expansion of heat equations with white noise
  potentials}, Potential Analysis, 16 (2002), pp.~45--66.

\bibitem{konig}
{\sc W.~K\"{o}nig}, {\em The Parabolic Anderson Model: Random Walk in Random Potential}, Birkh\"{a}user Applied Probability and Statistics, 2016. 


\bibitem{le1992some}
{\sc J.-F. Le~Gall}, {\em Some properties of planar Brownian motion}, in Ecole
  d'Et{\'e} de Probabilit{\'e}s de Saint-Flour XX-1990, Springer, 1992,
  pp.~111--229.

\bibitem{le1994exponential}
\leavevmode\vrule height 2pt depth -1.6pt width 23pt, {\em Exponential moments
  for the renormalized self-intersection local time of planar Brownian motion},
  in S{\'e}minaire de Probabilit{\'e}s XXVIII, Springer, 1994, pp.~172--180.
  
  \bibitem{MP}
  {\sc J.~Martin, and N.~Perkowski}, {\em Paracontrolled distributions on Bravais lattices and weak universality of the 2d parabolic Anderson model}, arXiv preprint arXiv:1704.08653, (2017).
  
  \bibitem{Nualart} 
{\sc D.~Nualart}, {\em The Malliavin Calculus and Related Topics}. {Springer}, 2006. 

%

\bibitem{Stroock} 
{\sc D.~Stroock}, {\em Homogeneous chaos revisited}. In: {\it Seminaire de Probabilit\'es}
XXI, Lecture Notes in Math. 1247 (1987) 1–8.

\bibitem{varadhan1969appendix}
{\sc S.~Varadhan}, {\em Appendix to Euclidean quantum field theory by K.
  Symanzik}, Local Quantum Theory. Academic Press, Reading, MA, 1 (1969).

\bibitem{yor1986precisions}
{\sc M.~Yor}, {\em Precisions sur l'existence et la continuite des temps locaux
  d'intersection du mouvement Brownien dans $\mathbf{R}^{2}$}, in S{\'e}minaire
  de Probabilit{\'e}s XX 1984/85, Springer, 1986, pp.~532--542.




\end{thebibliography}
\end{document}